\theoremstyle{plain} 
\newtheorem{theorem}{\indent\sc Theorem}[section]
\newtheorem{lemma}[theorem]{\indent\sc Lemma}
\theoremstyle{definition} 
\newtheorem{remark}[theorem]{\indent\sc Remark}
\def\address#1#2{\begingroup
	\noindent\parbox[t]{7.8cm}{%
		\small{\scshape\ignorespaces#1}\par\vskip1ex
		\noindent\small{\itshape E-mail address}%
		\/: #2\par\vskip4ex}\hfill%
	\endgroup}%
\title{\uppercase{Rellich type inequalities on the half line in the framework of equalities}} 
\author{
	%
	\bigskip \\
	\textsc{Yi C. Huang and Fuping Shi} 
}
\date{} 
\def\R{{\mathbf R}}
\def\H{{\mathbf H}}
\def\L{{\mathfrak L}}
\def\D{{\mathrm d}}
\def\C{{\mathbf C}}
\begin{document}
	\maketitle
	\footnote{ 
		2020 \textit{Mathematics Subject Classification}.
		Primary 26D10; Secondary 46E35, 35A23
	}
	\footnote{ 
		\textit{Key words and phrases}.
		Rellich inequalities, Hardy inequalities, radial derivative, polar coordinates.
	}
	\footnote{ 
		Research of the authors is supported by the National NSF Grant of China (no. 11801274).
	}
	
	\begin{abstract}
		We establish in this paper some weighted Hardy and Rellich type inequalities on the half line in the framework of equalities, extending recent results proved by Machihara-Ozawa-Wadade and Bez-Machihara-Ozawa. In particular, the one-dimensional classical Rellich inequality is proved in the framework of equality.
	\end{abstract}
	
	\section{Introduction and main results}
	
	Let $n\ge2$. It is well-known that the standard Laplacian on ${\mathbb{R}}^{n}$ decomposes as \[\Delta=\Delta_r+\frac{1}{|x|^2}\Delta_{{\mathbb{S}}^{n-1}},\] where $\Delta_r$ denotes the radial Laplacian and $\Delta_{{\mathbb{S}}^{n-1}}$ denotes the Laplacian-Beltrami operator on the sphere ${\mathbb{S}}^{n-1}$. More precisely, the radial Laplacian is given by
	\[\Delta_rf=\partial_r^2f+\frac{n-1}{|x|}\partial_rf,\]
	where $\partial_r$ denotes the radial derivative $\partial_r=\frac{x}{|x|}\cdot\nabla$. 
		
	Let $\|\cdot\|_2$ denote the $L^2$-norm on $\mathbb{R}^n$ with respect to Lebesgue measure. Let $\R _Q=\frac{Q(Q-4)}{4}$, where $Q\in\mathbb{R}$. The following ``Rellich type equalities'' were recently proved by Machihara-Ozawa-Wadade in \cite{ref2} and Bez-Machihara-Ozawa in \cite{ref1}. 	
	
	\begin{theorem}[Bez-Machihara-Ozawa]
		For $n\ge2$ and $f\in C_0^{\infty}(\mathbb{R}^n\backslash\{0\})$ we have	
			\begin{align}\label{O1}
				\R_n^2\left\|\frac{f}{|x|^2}\right\|_2^2&=\|\Delta_rf\|_2^2-\left\|\Delta_rf+\R_n\frac{f}{|x|^2}\right\|_2^2-2\R_n\left\|\frac{f^*}{|x|}\right\|_2^2\\\label{O1'}
				&=\|\Delta_rf\|_2^2-\left(1+\frac{1}{2}\R_n\right)\left\|\Delta_rf+\R_n\frac{f}{|x|^2}\right\|_2^2+\frac{1}{2}\R_n\left\|f^\#\right\|_2^2,
			\end{align}	
		where $f^*=\partial_rf+\frac{n-4}{2|x|}f$ and $f^{\#}=\partial_r^2f+(n-3)\frac{\partial_r f}{|x|}+\frac{(n-4)^2}{4}\frac{f}{|x|^2}$.	
	\end{theorem}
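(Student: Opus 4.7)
My plan is to pass to polar coordinates $x=r\omega$, $r>0$, $\omega\in\mathbb{S}^{n-1}$, so that all five $L^2(\mathbb{R}^n)$-norms appearing in \eqref{O1}--\eqref{O1'} become integrals of purely radial expressions against the weighted measure $r^{n-1}\,dr\,d\sigma(\omega)$. The angular variable then plays the role of a spectator parameter, and everything reduces to one-dimensional identities on $(0,\infty)$ for the smooth compactly supported function $u(r):=f(r\omega)$; boundary terms at $r=0$ and at $r=\infty$ vanish throughout.

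For \eqref{O1} I would first expand $\|\Delta_rf+\R_nf/|x|^2\|_2^2$ and reduce the identity to the Hardy-type equality
\[
-\langle \Delta_rf,f/|x|^2\rangle=\R_n\bigl\|f/|x|^2\bigr\|_2^2+\bigl\|f^*/|x|\bigr\|_2^2.
\]
Two integrations by parts give $-\langle\Delta_rf,f/|x|^2\rangle=\int|\partial_rf|^2/|x|^2\,dx+(n-4)\int f^2/|x|^4\,dx$; independently, expanding $(f^*)^2$ and integrating the cross term by parts yields $\|f^*/|x|\|_2^2=\int|\partial_rf|^2/|x|^2\,dx-\frac{(n-4)^2}{4}\int f^2/|x|^4\,dx$. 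Matching the two sides then amounts to the numerical identity $\R_n-\frac{(n-4)^2}{4}=n-4$, which is immediate from $\R_n=\frac{n(n-4)}{4}$.

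For \eqref{O1'} the key step is the pointwise operator relation
\[
\Delta_rf+\R_n\frac{f}{|x|^2}=f^{\#}+\frac{2\,f^*}{|x|},
\]
verified by a direct coefficient comparison using the definitions of $f^*$ and $f^{\#}$. Squaring and integrating produces a cross term $4\langle f^{\#},f^*/|x|\rangle$, which I claim vanishes. To see this, I would exploit the second pointwise relation $f^{\#}=\partial_r f^*+\frac{n-2}{2|x|}f^*$, equivalent to saying that the function $g(r):=r^{(n-2)/2}f^*(r\omega)$ satisfies $g'(r)=r^{(n-2)/2}f^{\#}(r\omega)$. Hence for each fixed $\omega$,
\[
\int_0^{\infty}f^{\#}(r\omega)\,f^*(r\omega)\,r^{n-2}\,dr=\int_0^{\infty}g(r)g'(r)\,dr=0,
\]
since $g$ is compactly supported in $(0,\infty)$; integrating in $\omega$ gives $\langle f^{\#},f^*/|x|\rangle=0$. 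Therefore $\|\Delta_rf+\R_nf/|x|^2\|_2^2=\|f^{\#}\|_2^2+4\|f^*/|x|\|_2^2$, and substituting into \eqref{O1} and rearranging yields \eqref{O1'}.

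The main obstacle is purely computational rather than conceptual: tracking the powers $r^{n-1},r^{n-3},r^{n-5}$ when moving derivatives across integrations by parts is error-prone, and the two pointwise operator identities above --- $\Delta_rf+\R_nf/|x|^2=f^{\#}+2f^*/|x|$ and $f^{\#}=\partial_rf^*+\frac{n-2}{2|x|}f^*$ --- are the heart of the matter. Once these identities and the Hardy-type equality above are in hand, both \eqref{O1} and \eqref{O1'} follow essentially mechanically.
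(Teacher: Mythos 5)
Your proposal is correct: I checked the two pointwise identities $\Delta_rf+\R_nf/|x|^2=f^{\#}+2f^*/|x|$ and $f^{\#}=\partial_rf^*+\frac{n-2}{2|x|}f^*$, the two integration-by-parts computations, and the numerical identity $\R_n-\frac{(n-4)^2}{4}=n-4$, and all are right; the only cosmetic caveat is that for complex-valued $f$ the cross terms should carry $\Re$ throughout (e.g.\ $\Re\int_0^\infty g'\bar g\,\D r=\frac12\int_0^\infty(|g|^2)'\,\D r=0$), and the reduction of \eqref{O1} to your Hardy-type equality divides by $2\R_n$, which is harmless since the case $\R_n=0$ (i.e.\ $n=4$) is trivial and your equality implies \eqref{O1} in all cases anyway. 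Your route differs from the paper's in organization rather than in spirit. The paper does not prove the theorem directly: it first establishes the two-parameter weighted one-dimensional identities \eqref{R1}--\eqref{R2} for the operator $\L_\alpha=\partial_r^2+\frac{\alpha}{r}\partial_r$ and the norm $\|\cdot\|_{L^2_\beta}$, and then obtains the stated theorem as the specialization $\alpha=\beta=n-1$ via polar coordinates and Fubini. In that general setting the algebra is driven by an abstract Hilbert-space lemma (Lemma \ref{l1}) converting $\|u\|^2=-c\Re\langle u,v\rangle+a$ into the quotient form, by the weighted Hardy equality \eqref{H1} applied to $f/r$, and, for the second identity, by the general weighted identity \eqref{R5} applied to a cleverly chosen $g$, yielding \eqref{R6}. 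Your pointwise factorizations are exactly the case $\alpha=\beta=n-1$ of that mechanism --- your relation $\|\Delta_rf+\R_nf/|x|^2\|_2^2=\|f^{\#}\|_2^2+4\|f^*/|x|\|_2^2$ is \eqref{R6} with $(\beta-\alpha-2)^2=4$, and your vanishing cross term $\int_0^\infty g g'\,\D r=0$ is the same total-derivative observation that underlies \eqref{H1} and \eqref{R5}. What your version buys is transparency: the two operator identities make the structure of \eqref{O1'} visible at a glance without auxiliary lemmas. What the paper's version buys is generality: the full two-parameter family \eqref{R1}--\eqref{R2}, from which the weighted Hardy and Rellich inequalities of Remark 2.3 and Theorem 1.3 also follow.
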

	
		Note that the above results can be in fact rephrased on the ray $\mathbb{R}_+:=(0,\infty)$ since they only involve the radial derivative. The original equalities \eqref{O1} and \eqref{O1'} then follow from the polar decomposition formula and the radially reduced equalities (with $L^2(r^{n-1}\D r)$-norm on $\mathbb{R}_+$). Moreover, in this radially reduced one-dimensional setting, we can consider weighted analogues. Let $\beta \in \mathbb{R}$ and denote by $\|\cdot\|_{L_\beta^2}$ the following weighted $L^2$-norm:
	\[\|f\|_{L_\beta^2}^2=\int_{0}^{\infty}|f(r)|^2r^\beta\mathrm d r.\]
	Associated to $\alpha \in \mathbb{R}$, we also consider the weighted second order derivative  \[\L_\alpha:=\partial_r^2+\frac{\alpha}{r}\partial_r.\]
Obviously $\L_{n-1}=\Delta_r$. 

For $\alpha, \beta \in\mathbb{R}$, let \[\C_{\alpha,\beta}=\frac{(\beta-3)(2\alpha-\beta+1)}{4}.\]
For $\L_{\alpha}$, we have the following analogues of results obtained in \cite{ref2} and \cite{ref1}.

	\begin{theorem}	
		For $f\in \mathbb{C}_0^{\infty}(\mathbb{R}_+)$ there hold
		\begin{equation}\label{R1}
			\C_{\alpha,\beta}^2\left\|\frac{f}{r^2}\right\|_{L_\beta^2}^2=\left\|\L_{\alpha}f\right\|_{L_\beta^2}^2-\left\|\L_{\alpha}f+\C_{\alpha,\beta}\frac{f}{r^2}\right\|_{L_\beta^2}^2
			-2\C_{\alpha,\beta}\left\|\frac{f^*}{r}\right\|_{L_\beta^2}^2
		\end{equation}
	and, if $\beta-\alpha-2\ne0$,
	\begin{equation}\label{R2}
		\begin{aligned}
				\C_{\alpha,\beta}^2\left\|\frac{f}{r^2}\right\|_{L_\beta^2}^2&=\left\|\L_{\alpha}f\right\|_{L_\beta^2}^2-\left(1+\frac{2\C_{\alpha,\beta}}{\left(\beta-\alpha-2\right)^2}\right)\left\|\L_{\alpha}f+\C_{\alpha,\beta}\frac{f}{r^2}\right\|_{L_\beta^2}^2\\
				&\qquad+\frac{2\C_{\alpha,\beta}}{\left(\beta-\alpha-2\right)^2}\left\|f^\#\right\|_{L_\beta^2}^2,
		\end{aligned}	
	\end{equation}
	where $f^*=f'+\frac{\beta-3}{2r}f$ and $f^\#=f''+\frac{\beta-2}{r}f'+\frac{(\beta-3)^2}{4r^2}f.$
	\end{theorem}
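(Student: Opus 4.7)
The plan is to derive (R1) by expanding the squared norm $\|\L_\alpha f + \C_{\alpha,\beta}\frac{f}{r^2}\|_{L^2_\beta}^2$ and computing the cross term via integration by parts on $(0,\infty)$; then to deduce (R2) as a consequence of (R1) combined with an algebraic identity linking $f^\#$, $\L_\alpha f + \C_{\alpha,\beta}\frac{f}{r^2}$, and $f^*/r$. All boundary terms in the integrations by parts vanish thanks to the assumption $f\in C_0^\infty(\mathbb{R}_+)$.

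For (R1), I first expand
\[
\left\|\L_\alpha f + \C_{\alpha,\beta}\tfrac{f}{r^2}\right\|_{L^2_\beta}^2 = \|\L_\alpha f\|_{L^2_\beta}^2 + 2\C_{\alpha,\beta}\int_0^\infty (\L_\alpha f)\, f\, r^{\beta-2}\,\D r + \C_{\alpha,\beta}^2\left\|\tfrac{f}{r^2}\right\|_{L^2_\beta}^2,
\]
so the whole task reduces to evaluating the cross integral. Two integrations by parts produce $-\int_0^\infty (f')^2 r^{\beta-2}\,\D r$ plus a boundary-free multiple of $\int_0^\infty f^2 r^{\beta-4}\,\D r$. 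A short algebraic step $\left(f^*\right)^2 = (f')^2 + (\beta-3)\frac{ff'}{r} + \frac{(\beta-3)^2}{4r^2}f^2$ combined with the Hardy-like identity $\int_0^\infty f f' r^{\beta-3}\,\D r = -\frac{\beta-3}{2}\int_0^\infty f^2 r^{\beta-4}\,\D r$ lets me replace $\int (f')^2 r^{\beta-2}\,\D r$ by $\|f^*/r\|_{L^2_\beta}^2 + \frac{(\beta-3)^2}{4}\|f/r^2\|_{L^2_\beta}^2$. After collecting terms one finds that the numerical coefficient of $\|f/r^2\|_{L^2_\beta}^2$ in the cross integral is exactly $-\C_{\alpha,\beta}$, whence
\[
\int_0^\infty (\L_\alpha f)\,f\,r^{\beta-2}\,\D r = -\|f^*/r\|_{L^2_\beta}^2 - \C_{\alpha,\beta}\|f/r^2\|_{L^2_\beta}^2,
\]
and rearranging gives (R1). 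The only nontrivial step is recognizing that the collected coefficient simplifies to $\C_{\alpha,\beta}$; matching $\frac{(\beta-3)(\beta-1-2\alpha)}{4}$ to $-\C_{\alpha,\beta}$ is purely arithmetic.

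For (R2), I note the algebraic identity
\[
f^\# = \L_\alpha f + \C_{\alpha,\beta}\tfrac{f}{r^2} + (\beta-\alpha-2)\tfrac{f^*}{r},
\]
which I verify by matching the three coefficients of $f''$, $f'/r$ and $f/r^2$ on both sides using $\frac{(\beta-3)^2}{4} - \C_{\alpha,\beta} = \frac{(\beta-3)(\beta-\alpha-2)}{2}$. Taking the $L^2_\beta$-norm squared, the cross term is $2(\beta-\alpha-2)\int_0^\infty \left(\L_\alpha f + \C_{\alpha,\beta}\frac{f}{r^2}\right)f^* r^{\beta-1}\,\D r$. Again one IBP computation together with $\int_0^\infty f f^* r^{\beta-3}\,\D r = 0$ (an immediate consequence of the Hardy identity above) shows this integral equals $-(\beta-\alpha-2)\|f^*/r\|_{L^2_\beta}^2$. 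Therefore
\[
\|f^\#\|_{L^2_\beta}^2 = \left\|\L_\alpha f + \C_{\alpha,\beta}\tfrac{f}{r^2}\right\|_{L^2_\beta}^2 - (\beta-\alpha-2)^2\|f^*/r\|_{L^2_\beta}^2,
\]
and solving for $\|f^*/r\|_{L^2_\beta}^2$ (legitimate since $\beta-\alpha-2\neq 0$) and substituting into (R1) produces (R2) after combining the two coefficients of $\|\L_\alpha f + \C_{\alpha,\beta}\frac{f}{r^2}\|_{L^2_\beta}^2$.

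The calculations are routine once the right structure is seen, and I expect the only real obstacle to be bookkeeping: one must keep track of four integrals with slightly different weights $r^{\beta-1},r^{\beta-2},r^{\beta-3},r^{\beta-4}$, and the constants need to collapse exactly to $\C_{\alpha,\beta}$ and $(\beta-\alpha-2)$ in the two key places. Guessing the decomposition $f^\# = \L_\alpha f + \C_{\alpha,\beta}\frac{f}{r^2} + (\beta-\alpha-2)\frac{f^*}{r}$ is the decisive observation that makes (R2) fall out of (R1) without a second round of integrations by parts.
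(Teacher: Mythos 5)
Your proof is correct, and it takes a noticeably more direct route than the paper. For \eqref{R1} the paper works in the opposite direction: it starts from $\left\|\frac{f}{r^2}\right\|_{L^2_\beta}^2$, integrates by parts twice (which forces it to divide by $\beta-3$ and $\beta-2$ and hence to treat $\beta=3$ and $\beta=2$ as separate cases), rewrites $\left\|\frac{f'}{r}\right\|_{L^2_\beta}^2$ via a radial Hardy equality applied to $f/r$, and finally invokes an abstract Hermitian-space lemma to complete the square. You instead expand $\left\|\L_\alpha f+\C_{\alpha,\beta}\frac{f}{r^2}\right\|_{L^2_\beta}^2$ and evaluate the single cross term; since none of your integrations by parts requires dividing by a possibly vanishing constant, no case analysis is needed, and your identity $\int_0^\infty|f'|^2r^{\beta-2}\,\D r=\left\|\frac{f^*}{r}\right\|_{L^2_\beta}^2+\frac{(\beta-3)^2}{4}\left\|\frac{f}{r^2}\right\|_{L^2_\beta}^2$ is exactly the paper's Lemma 2.2 in disguise (with $\beta$ shifted to $\beta-2$). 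The difference is more pronounced for \eqref{R2}: the paper derives its key identity \eqref{R6} by applying a weighted first-order equality \eqref{R5} to the rather opaque auxiliary function $g=\bigl(\frac{f'}{r}+(\H_{\beta+1}-1)\frac{f}{r^2}\bigr)r^{\frac{2\alpha-\beta+5}{2}}$, whereas your pointwise decomposition $f^\#=\L_\alpha f+\C_{\alpha,\beta}\frac{f}{r^2}+(\beta-\alpha-2)\frac{f^*}{r}$ makes the same relation transparent and yields \eqref{R6} by one expansion plus the orthogonality $\int_0^\infty f\,\overline{f^*}\,r^{\beta-3}\,\D r=0$; I verified that your coefficient computations (the collapse to $-\C_{\alpha,\beta}$ and to $-(\beta-\alpha-2)\left\|\frac{f^*}{r}\right\|_{L^2_\beta}^2$) are correct. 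The only cosmetic point: the theorem allows complex-valued $f$, so your cross terms should carry $\Re$ and $f\bar f'$ in place of $ff'$, which changes nothing in the argument.
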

Notice that $\C_{n-1,n-1}=\R_n$, therefore Theorem 1.1 is a direct consequence of Theorem 1.2 when $\alpha=\beta=n-1$,  that is,
\begin{align}\label{O2}
	\R_n^2\left\|\frac{f}{r^2}\right\|_{L_{n-1}^2}^2&=\|\L_{n-1}f\|_{L_{n-1}^2}^2-\left\|\L_{n-1}f+\R_n\frac{f}{r^2}\right\|_{L_{n-1}^2}^2-2\R_n\left\|\frac{f^*}{r}\right\|_{L_{n-1}^2}^2\\  \label{O2'}
	&=\left\|\L_{n-1}f\right\|_{L_{n-1}^2}^2-\left(1+\frac{\R_n}{2}\right)\left\|\L_{n-1}f+\R_n\frac{f}{r^2}\right\|_{L_{n-1}^2}^2+\frac{\R_n}{2}\left\|f^\#\right\|_{L_{n-1}^2}^2
\end{align}
To recover \eqref{O1}-\eqref{O1'} from \eqref{O2}-\eqref{O2'}, it suffices to use polar coordinates
\[(r,\omega)=\left(|x|,\frac{x}{|x|}\right)\in\mathbb{R}_+\times\mathbb{S}^{n-1},\]
combined with integration over $\mathbb{S}^{n-1}$ and Fubini theorem.

The equalities \eqref{R1} and \eqref{R2} imply the following ``Rellich type inequalities''.
\begin{theorem}
	For all $\alpha,\beta \in \mathbb{R}$ and $f\in C_0^{\infty}(\mathbb{R}_+)$, there holds  \begin{equation}\label{R7}
		\C_{\alpha,\beta}^2\left\|\frac{f}{r^2}\right\|_{{L_\beta^2}}^2\le\|\L_\alpha f\|_{{L_\beta^2}}^2.
	\end{equation}

\end{theorem}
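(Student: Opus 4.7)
The plan is a dichotomy on the sign of $\C_{\alpha,\beta}$, using \eqref{R1} for the ``easy'' sign and \eqref{R2} for the ``hard'' sign.

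\emph{Case 1: $\C_{\alpha,\beta}\ge 0$.} Rewrite \eqref{R1} as
\[
\|\L_\alpha f\|_{L_\beta^2}^2 - \C_{\alpha,\beta}^2\left\|\frac{f}{r^2}\right\|_{L_\beta^2}^2
= \left\|\L_\alpha f + \C_{\alpha,\beta}\frac{f}{r^2}\right\|_{L_\beta^2}^2 + 2\C_{\alpha,\beta}\left\|\frac{f^*}{r}\right\|_{L_\beta^2}^2.
\]
Both terms on the right are non-negative, so \eqref{R7} follows immediately.

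\emph{Case 2: $\C_{\alpha,\beta}<0$.} The first observation is that we must have $\beta-\alpha-2\ne 0$ in this case, so that \eqref{R2} is available. Indeed, setting $u=\beta-3$ and $v=2\alpha-\beta+1$ one has $\C_{\alpha,\beta}=uv/4$, and $\beta-\alpha-2=(u-v)/2$, so $\beta-\alpha-2=0$ would force $u=v$ and hence $\C_{\alpha,\beta}=u^2/4\ge 0$, contradicting the assumption. The key algebraic identity in the same variables is
\[
(\beta-\alpha-2)^2 + 2\C_{\alpha,\beta} \;=\; \frac{(u-v)^2}{4}+\frac{uv}{2}\;=\;\frac{u^2+v^2}{4}\;=\;\frac{(\beta-3)^2+(2\alpha-\beta+1)^2}{4}\ge 0,
\]
which, after dividing by $(\beta-\alpha-2)^2>0$, shows that the coefficient
\[
1+\frac{2\C_{\alpha,\beta}}{(\beta-\alpha-2)^2}
\]
appearing in \eqref{R2} is non-negative. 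Since $\C_{\alpha,\beta}<0$, the term $\frac{2\C_{\alpha,\beta}}{(\beta-\alpha-2)^2}\|f^{\#}\|_{L_\beta^2}^2$ is non-positive, so rearranging \eqref{R2} as
\[
\|\L_\alpha f\|_{L_\beta^2}^2 - \C_{\alpha,\beta}^2\left\|\frac{f}{r^2}\right\|_{L_\beta^2}^2
= \left(1+\frac{2\C_{\alpha,\beta}}{(\beta-\alpha-2)^2}\right)\left\|\L_\alpha f+\C_{\alpha,\beta}\frac{f}{r^2}\right\|_{L_\beta^2}^2 - \frac{2\C_{\alpha,\beta}}{(\beta-\alpha-2)^2}\|f^{\#}\|_{L_\beta^2}^2
\]
displays the right-hand side as a sum of two non-negative terms, yielding \eqref{R7}.

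The only non-routine step is the algebraic identity that certifies non-negativity of the coefficient in \eqref{R2}; the rest is a sign check. I do not expect any analytic obstacle, since \eqref{R1}--\eqref{R2} already encode all the integration by parts.
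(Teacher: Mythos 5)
Your proposal is correct and follows essentially the same route as the paper: the same dichotomy on the sign of $\C_{\alpha,\beta}$, the same use of \eqref{R1} when $\C_{\alpha,\beta}\ge0$ and of \eqref{R2} when $\C_{\alpha,\beta}<0$, and the same observation that $\C_{\alpha,\beta}<0$ forces $\beta-\alpha-2\ne0$. The only cosmetic difference is how the non-negativity of $1+\tfrac{2\C_{\alpha,\beta}}{(\beta-\alpha-2)^2}$ is certified: your identity $(\beta-\alpha-2)^2+2\C_{\alpha,\beta}=\tfrac{(\beta-3)^2+(2\alpha-\beta+1)^2}{4}$ is an equivalent (and arguably cleaner) rewriting of the paper's completion of the square $\tfrac12+\tfrac{(\alpha-1)^2}{2(\beta-\alpha-2)^2}$.
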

 For $n\ge2$, by taking $\alpha=\beta=n-1$ in Theorem 1.3 and using polar coordinates and Fubini theorem as indicated above, we get the higher dimensional results in \cite{ref1}.
 \begin{theorem}[Bez-Machihara-Ozawa]
 	Let $n\ge2$. For $f\in C_0^{\infty}(\mathbb{R}^n\backslash\{0\})$, there holds 
 	\[\R_n^2\left\|\frac{f}{|x|^2}\right\|_2^2\le\|\Delta_rf\|_2^2.\]
 	
 \end{theorem}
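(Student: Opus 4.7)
The plan is to deduce Theorem 1.4 directly from Theorem 1.3, following the ``polar coordinates and Fubini'' reduction that is already flagged in the excerpt. The crucial algebraic observation is that $\mathbf{C}_{n-1,n-1}=\frac{(n-4)(2(n-1)-(n-1)+1)}{4}=\frac{(n-4)n}{4}=\mathbf{R}_n$, and that $\mathfrak{L}_{n-1}=\partial_r^2+\frac{n-1}{r}\partial_r=\Delta_r$. Therefore specializing Theorem 1.3 to the parameter choice $\alpha=\beta=n-1$ yields the one-dimensional inequality
\[
\mathbf{R}_n^2\left\|\frac{g}{r^2}\right\|_{L_{n-1}^2}^2\le \|\Delta_r g\|_{L_{n-1}^2}^2
\]
for every $g\in C_0^\infty(\mathbb{R}_+)$.

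Next I would pass from this half-line inequality to the full $n$-dimensional statement. Given $f\in C_0^\infty(\mathbb{R}^n\setminus\{0\})$, for each fixed $\omega\in\mathbb{S}^{n-1}$ define the radial slice $g_\omega(r):=f(r\omega)$. Since $f$ has compact support away from the origin, each $g_\omega$ lies in $C_0^\infty(\mathbb{R}_+)$. Writing $\partial_r=\tfrac{x}{|x|}\cdot\nabla$, the chain rule gives $\partial_r f(r\omega)=g_\omega'(r)$ and $\partial_r^2 f(r\omega)=g_\omega''(r)$, so that $(\Delta_r f)(r\omega)=\mathfrak{L}_{n-1} g_\omega(r)$.

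Then I would apply the half-line inequality to each $g_\omega$, integrate in $\omega$ against the surface measure $d\sigma(\omega)$, and invoke Fubini's theorem together with the polar-coordinate identity $dx=r^{n-1}\,dr\,d\sigma(\omega)$. The right-hand side becomes
\[
\int_{\mathbb{S}^{n-1}}\!\int_0^\infty |\mathfrak{L}_{n-1} g_\omega(r)|^2\, r^{n-1}\,dr\,d\sigma(\omega)=\int_{\mathbb{R}^n}|\Delta_r f(x)|^2\,dx=\|\Delta_r f\|_2^2,
\]
and the left-hand side becomes $\mathbf{R}_n^2\|f/|x|^2\|_2^2$ in precisely the same fashion. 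Assembling these two identities yields the desired inequality.

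I do not anticipate a genuine obstacle here: the result is essentially a rewriting of Theorem 1.3 under the dictionary $\alpha=\beta=n-1$. The only point requiring care is checking that the support condition $f\in C_0^\infty(\mathbb{R}^n\setminus\{0\})$ transfers to $g_\omega\in C_0^\infty(\mathbb{R}_+)$ uniformly in $\omega$, which is immediate because $\operatorname{supp} f$ is a compact subset of $\mathbb{R}^n\setminus\{0\}$ and hence lies in an annulus $\{0<a\le|x|\le b<\infty\}$.
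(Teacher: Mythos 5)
Your proposal is correct and follows exactly the route the paper takes: specialize Theorem 1.3 to $\alpha=\beta=n-1$ (using $\C_{n-1,n-1}=\R_n$ and $\L_{n-1}=\Delta_r$), then integrate the resulting half-line inequality over $\mathbb{S}^{n-1}$ via polar coordinates and Fubini. The paper leaves these steps implicit; you have merely written them out in full, including the (correct) verification that the slices $g_\omega$ lie in $C_0^{\infty}(\mathbb{R}_+)$.
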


\begin{remark}
	There is a norm comparsion between $\Delta_r$ and $\Delta$: if $n\ge3$, then
	\[\|\Delta_rf\|_2\le \|\Delta f\|_2\]
	holds for $f\in C_0^{\infty}(\mathbb{R}^n\backslash\{0\})$. 
	This combined with Theorem 1.4 gives a proof of classical Rellich inequalities (see \cite{R54, R69}) on $\mathbb{R}^n$ for $n\ge3$ in the framework of equalities. The one-dimensional Rellich inequality is included in our Theorem 1.2 and Theorem 1.3.
\end{remark}

\section{Proof of Theorem 1.2}
	\begin{proof}[Proof of \eqref{R1}]
		If $\beta=3$, the equality trivially holds. So we assume $\beta\ne3$ in the following. Denote the Hermitian product with respect to $\left\|\cdot\right\|_{L_\beta^2}$ by $\left\langle \cdot,\cdot\right\rangle$. Using integration by parts twice with respect to $\left\|\frac{f}{r^2}\right\|_{L_\beta^2}^2$ we have
		\begin{equation}\label{R3}
				\begin{aligned}
				\int_{0}^{\infty}\frac{|f(r)|^2}{r^4}r^{\beta}\mathrm d r
				&=\int_{0}^{\infty}f(r)\overline{f(r)}r^{\beta-4}\mathrm d r\\
				&=-\frac{2}{\beta-3}\Re\int_{0}^{\infty}f(r)\overline{f'(r)}r^{\beta-3}\mathrm d r\\
				&=\frac{2}{(\beta-2)(\beta-3)}\Re\int_{0}^{\infty}\left(|f'(r)|^2+f(r)\overline{f''(r)}\right)r^{\beta-2}\mathrm d r\\
				&=\frac{2}{(\beta-2)(\beta-3)}\left(\left\|\frac{f'}{r}\right\|_{L_\beta^2}^2+\Re\left\langle\frac{f}{r^2},{f''}\right\rangle\right).
			\end{aligned}
		\end{equation}		
We used $\beta\ne2$ in the third step and we will give details for the case $\beta=2$ later. 

The first term in the last expression on RHS of \eqref{R3} is rewritten as \begin{equation}\label{R4}
	\left\|\frac{f'}{r}\right\|_{L_{\beta}^2}^2=\left\|\left(\frac{f}{r}\right)'+\frac{f}{r^2}\right\|_{L_{\beta}^2}^2
	=\left\|\left(\frac{f}{r}\right)'\right\|_{L_{\beta}^2}^2+2\Re\left\langle\left(\frac{f}{r}\right)',\frac{f}{r^2}\right\rangle+\left\|\frac{f}{r^2}\right\|_{L_{\beta}^2}^2.
\end{equation}	
We apply Lemma 2.2 below with $f$ replaced by $\frac{f}{r}$ to obtain\[\left\|\left(\frac{f}{r}\right)'\right\|_{L_\beta^2}^2=\H_{\beta+1}^2\left\|\frac{f}{r^2}\right\|_{L_\beta^2}^2+\left\|\frac{f'}{r}+\frac{\H_{\beta+1}-1}{r^2}f\right\|_{L_\beta^2}^2.\]
An integration by parts gives 
\[2\Re\left\langle\left(\frac{f}{r}\right)',\frac{f}{r^2}\right\rangle=\int_{0}^{\infty}\frac{\left(\frac{|f|^2}{r^2}\right)'}{r}r^{\beta}\mathrm d r=-(\beta-1)\left\|\frac{f}{r^2}\right\|_{L_\beta^2}^2.\]
Substituting above two equalities into \eqref{R4} leads to
\[\left\|\frac{f'}{r}\right\|_{L_\beta^2}^2=(\H_{\beta+1}-1)^2\left\|\frac{f}{r^2}\right\|_{L_\beta^2}^2+\left\|\left(\frac{f}{r}\right)'+\H_{\beta+1}\frac{f}{r^2}\right\|_{L_\beta^2}^2.\]
The second term in the last expression on RHS of \eqref{R3} is rewritten as
	\begin{align}\label{Q1}
		\Re\left\langle\frac{f}{r^2},{f''}\right\rangle=\Re\left\langle\frac{f}{r^2},{f''+\frac{\alpha}{r}f'}\right\rangle-\alpha\Re\left\langle\frac{f}{r^3},{f'}\right\rangle,
	\end{align}
	where the last term is given by
	\[\Re\left\langle\frac{f}{r^3},{f'}\right\rangle=\frac{1}{2}\Re\left\langle\frac{1}{r^3},{(f\bar{f})'}\right\rangle=-\frac{(\beta-3)}{2}\left\|\frac{f}{r^2}\right\|_{L_\beta^2}^2.\]
If $2\alpha-\beta+1=0$, then $\C_{\alpha,\beta}=0$ and \eqref{R1} holds trivially. If not, then $\C_{\alpha,\beta}\ne0$, hence we can gather the equalities \eqref{R4} and \eqref{Q1} and get
	\[\left\|\frac{f}{r^2}\right\|_{L_\beta^2}^2=-\frac{1}{\C_{\alpha,\beta}}\Re\left\langle\frac{f}{r^2},{f''+\frac{\alpha}{r}f'}\right\rangle-\frac{1}{\C_{\alpha,\beta}}\left\|\frac{f'}{r}+(\H_{\beta+1}-1)\frac{f}{r^2}\right\|_{L_\beta^2}^2.\]
	By Lemma \ref{l1} below, we get the desired equality.
		
		For $\beta=2$, one can verify that
		\[\Re\left\langle\frac{f}{r^2},{f''}\right\rangle=\Re\left\langle\frac{f}{r},\frac{f'}{r}\right\rangle=\left\|\frac{f'}{r}\right\|_{L_2^2}^2,\]
		and thus by similar arguments we complete the proof of \eqref{R1}.\end{proof}
		
		For the completeness of the arguments we record the mentioned lemmas below.
		\begin{lemma}[Machihara-Ozawa-Wadade]\label{l1}
			Let $\mathsf{H}$ be a vector space with Hermitian scalar product $\left\langle\cdot,\cdot\right\rangle$. Let $a\in\mathbb{R}, c\ne0$ and $u,v\in \mathsf{H}$. Then
			\[\|u\|^2=-c\Re\left\langle u,v\right\rangle+a\Leftrightarrow\frac{1}{c^2}\|u\|^2=\|v\|^2-\left\|v+\frac{1}{c}u\right\|^2+\frac{2a}{c^2}.\]
		\end{lemma}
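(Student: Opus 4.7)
The plan is to establish the equivalence by a single direct expansion of the Hermitian norm on the RHS, after which both sides reduce to the same scalar identity.

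First, I would expand
\[
\left\|v+\tfrac{1}{c}u\right\|^2=\|v\|^2+\tfrac{2}{c}\Re\langle u,v\rangle+\tfrac{1}{c^2}\|u\|^2,
\]
using sesquilinearity of $\langle\cdot,\cdot\rangle$ and the hypothesis $c\in\mathbb{R}$ (so that $c$ commutes with conjugation and the cross term is real to begin with). This is the only non-tautological computation required.

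Substituting this expansion into the right-hand identity
\[
\tfrac{1}{c^2}\|u\|^2=\|v\|^2-\left\|v+\tfrac{1}{c}u\right\|^2+\tfrac{2a}{c^2},
\]
the $\|v\|^2$ terms cancel, and collecting the $\|u\|^2$ contributions produces
\[
\tfrac{2}{c^2}\|u\|^2=-\tfrac{2}{c}\Re\langle u,v\rangle+\tfrac{2a}{c^2}.
\]
Multiplying through by $c^2/2$ and using $c\ne 0$ yields exactly the left-hand identity $\|u\|^2=-c\Re\langle u,v\rangle+a$.

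Since each manipulation (expansion, cancellation, multiplication by $c^2/2$) is reversible under the standing assumption $c\ne 0$, the implication runs in both directions and the equivalence is established. There is no genuine obstacle: the lemma is essentially a repackaging of the polarization-type identity for a Hermitian product, and the whole argument amounts to one line of algebra. The only thing worth flagging is the role of the hypothesis $c\in\mathbb{R}$, which ensures that the cross term in the expansion is purely a real multiple of $\Re\langle u,v\rangle$ rather than a combination of real and imaginary parts.
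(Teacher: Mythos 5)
Your proof is correct and follows essentially the same route as the paper, which likewise reduces the lemma to the one-line expansion $\|f-g\|^2=\|f\|^2+\|g\|^2-2\Re\langle f,g\rangle$ for a Hermitian product; you have merely carried out the substitution and cancellation explicitly. Your remark about $c$ being real (so the cross term is $\tfrac{2}{c}\Re\langle u,v\rangle$) is a reasonable point to flag, since the paper leaves this implicit.
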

	The proof of this lemma is straightforward, noting that since $\left\langle\cdot,\cdot\right\rangle$ is Hermitian,
	\[\left\langle f-g,f-g\right\rangle=\|f\|_{\mathsf{H}}^2+\|g\|_{\mathsf{H}}^2-2\Re\left\langle f,g\right\rangle=\|f\|_{\mathsf{H}}^2+\|g\|_{\mathsf{H}}^2-2\Re\left\langle g,f\right\rangle.\]		
	
			Let $\H _Q=\frac{Q-2}{2}$, where $Q\in\mathbb{R}$. 
			The following can be viewed as a reduction to radial functions of the Hardy inequalities proved in \cite{MOW19} in the framework of equalities.
			
			\begin{lemma}
				For $\beta\in\mathbb{R}$ and $f\in C_0^{\infty}(\mathbb{R}_+)$ there holds \begin{equation}\label{H1}
					\H_{\beta+1}^2\left\|\frac{f}{r}\right\|_{L_\beta^2}^2=\left\|f'\right\|_{L_\beta^2}^2-\left\|f'+\H_{\beta+1}\frac{f}{r}\right\|_{L_\beta^2}^2.
				\end{equation}
			\end{lemma}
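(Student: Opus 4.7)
The plan is to derive \eqref{H1} in two steps: first a standard integration by parts that evaluates the mixed inner product $\Re\langle f/r, f'\rangle$, and then a direct application of Lemma \ref{l1} to repackage that inner-product identity into the desired ``Hardy equality'' form. Because $f\in C_0^{\infty}(\mathbb{R}_+)$ is compactly supported inside the open half-line, all boundary contributions at $0$ and at $\infty$ vanish automatically, and no case analysis on $\beta$ will be required.

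First I would compute $\Re\langle f/r, f'\rangle$ with respect to $\|\cdot\|_{L_\beta^2}$. Using $\Re(f\overline{f'}) = \frac{1}{2}(|f|^2)'$ and integrating by parts once in $r$ against the weight $r^{\beta-1}$ drops the derivative and, after identifying the resulting integral as $\|f/r\|_{L_\beta^2}^2$, produces the key identity
\[
\Re\left\langle \frac{f}{r}, f'\right\rangle = -\H_{\beta+1}\left\|\frac{f}{r}\right\|_{L_\beta^2}^2,
\]
valid for every $\beta\in\mathbb{R}$ (recall $\H_{\beta+1} = (\beta-1)/2$).

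To conclude, I would set $u = \H_{\beta+1}\, f/r$ and $v = f'$; multiplying the displayed identity by $\H_{\beta+1}$ rewrites it as $\|u\|^2 = -\Re\langle u, v\rangle$, which is the hypothesis of Lemma \ref{l1} with $c = 1$ and $a = 0$. The equivalence recorded in that lemma then immediately yields $\|u\|^2 = \|v\|^2 - \|v+u\|^2$, which is precisely \eqref{H1}. I expect no real obstacle here: the entire argument is one integration by parts plus an algebraic repackaging, with no surviving boundary terms and no division by $\beta$-dependent quantities needing special treatment (in contrast with the proof of \eqref{R1}, where the value $\beta = 2$ had to be handled separately).
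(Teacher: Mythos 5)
Your proof is correct and follows essentially the same route as the paper: a single integration by parts yielding $\Re\langle f/r,f'\rangle=-\H_{\beta+1}\|f/r\|_{L_\beta^2}^2$, followed by completing the square (which is all that Lemma \ref{l1} encapsulates). The only difference is cosmetic but pleasant: by absorbing the constant into $u=\H_{\beta+1}f/r$ and invoking the lemma with $c=1$, you avoid the separate treatment of $\beta=1$ that the paper needs because its choice $u=-\tfrac{2}{\beta-1}f'$ divides by $\beta-1$.
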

	
	\begin{proof}
		Note that the lemma when $\beta=1$ holds trivially. For $\beta\ne1$, let $u=-\frac{2}{\beta-1}f'$ and $v=\frac{f}{r}$. Using integration by parts as before, we obtain the expression
		\[\left\|v\right\|_{L_\beta^2}^2=\Re\int_0^{\infty}v\bar{u}r^{\beta}\D r.\]
		Then
		\begin{align*}
			\left\|u\right\|_{L_\beta^2}^2-\left\|v\right\|_{L_\beta^2}^2&=\left\|u\right\|_{L_\beta^2}^2-\left\|v\right\|_{L_\beta^2}^2+2\Re\int_0^{\infty}\left(|v|^2-v\bar{u}\right)r^{\beta}\D r\\
			&=\int_{0}^{\infty}|v-u|^2r^\beta \D r,
		\end{align*}
	which gives the desired equality.
	\end{proof}
\begin{remark}
	For $t\in\mathbb{R}$, by taking $\beta=t+2$ in \eqref{H1}, we obtain the weighted one-dimensional Hardy inequality (see, for example, Cassano and Pizzichillo \cite{CP18})
	\[\int_0^{\infty}|f'(r)|^2r^{t+2}\D r\ge\left(\frac{t+1}{2}\right)^2\int_{0}^{\infty}|f(r)|^2r^t\D r.\]
	As a consequence, we have the weighted Rellich inequality
	\[\int_0^{\infty}|f''(r)|^2r^{t+4}\D r\ge\left(\frac{t+3}{2}\right)^2\left(\frac{t+1}{2}\right)^2\int_{0}^{\infty}|f(r)|^2r^t\D r.\]
\end{remark}

\begin{proof}[Proof of \eqref{R2}]
	For $a\in\mathbb{R}$, we start by observing that
	\begin{equation}\label{R5}
		\frac{(\beta-1-a)^2}{4}\left\|\frac{g}{r^{1+\frac{a}{2}}}\right\|_{L_\beta^2}^2=\left\|\frac{g'}{r^{\frac{a}{2}}}\right\|_{L_\beta^2}^2-\left\|\frac{\beta-1-a}{2}\frac{g}{r^{1+\frac{a}{2}}}+\frac{g'}{r^{\frac{a}{2}}}\right\|_{L_\beta^2}^2
	\end{equation} 
holds for all $g\in C_0^{\infty}(\mathbb{R}_+)$. Indeed, this follows by using integration by parts to see that
\[2\Re\left\langle\frac{g}{r^{1+\frac{a}{2}}},\frac{g'}{r^{\frac{a}{2}}}\right\rangle=-(\beta-1-a)\left\|\frac{g}{r^{1+\frac{a}{2}}}\right\|_{L_\beta^2}^2.\]
If $\beta-1-a=0$, the equality trivially holds. By using Lemma \ref{l1}, we obtain \eqref{R5}. 

Now taking $a=2\alpha-\beta+3$ and $g$ given by
\[g(r)=\left(\frac{f'}{r}+(\H_{\beta+1}-1)\frac{f}{r^2}\right)r^{\frac{2\alpha-\beta+5}{2}},\]
we obtain that for $\beta-\alpha-2\ne0$,
\begin{equation}\label{R6}
		\left\|\frac{f'}{r}+\frac{\beta-3}{2r^2}f\right\|_{L_\beta^2}^2=\frac{1}{(\beta-\alpha-2)^2}\left\|\L_{\alpha}f+\C_{\alpha,\beta}\frac{f}{r^2}\right\|_{L_\beta^2}^2-\frac{1}{(\beta-\alpha-2)^2}	\left\|f^\#\right\|_{L_\beta^2}^2,	
\end{equation}
where $f^\#=f''+\frac{\beta-2}{r}f'+\frac{(\beta-3)^2}{4r^2}f.$ We see that \eqref{R2} follows from \eqref{R1} and \eqref{R6}.
\end{proof}
	
	\section{Proof of Theorem 1.3}

		If $\C_{\alpha,\beta}\ge0$, 
		then \eqref{R7} follows from \eqref{R1}. For $\beta-\alpha-2=0$, we have
		\[\C_{\alpha,\beta}=\frac{(\beta-3)^2}{4}.\]
	Therefore, if $\C_{\alpha,\beta}<0$, then $\beta-\alpha-2\ne0$. We can use the expression \eqref{R2} for 	$\C_{\alpha,\beta}^2\left\|\frac{f}{r^2}\right\|_{L_\beta^2}^2$ in Theorem 1.2. A direct calculation gives
	\begin{align*}
		1+\frac{2\C_{\alpha,\beta}}{\left(\beta-\alpha-2\right)^2}&=\frac{1}{2}+\frac{2\alpha\beta-\beta^2+4\beta-6\alpha-3+(\beta-(\alpha+2))^2}{2\left(\beta-\alpha-2\right)^2}
		\\&=\frac{1}{2}+\frac{(\alpha-1)^2}{2\left(\beta-\alpha-2\right)^2}>0,
	\end{align*} 
		thus \eqref{R7} follows. This completes the proof.

\bigskip
\address{ 
	School of Mathematical Sciences \\
	Nanjing Normal University \\
	Nanjing 210023 \\
	People's Republic of China
}
{Yi.Huang@njnu.edu.cn}
\address{
	School of Mathematical Sciences \\
	Nanjing Normal University \\
	Nanjing 210023 \\
	People's Republic of China
}
{ryan1165761719@foxmail.com}
\end{document}